\numberwithin{equation}{section}
\newtheorem{theorem}{Theorem}[section]
\newtheorem{corollary}[theorem]{Corollary}
\newtheorem{lemma}[theorem]{Lemma}
\newtheorem{assumption}[theorem]{Assumption}
\theoremstyle{definition}
\newtheorem{definition}[theorem]{Definition}
\newtheorem{remark}[theorem]{Remark}
 \newcommand{\norm}[1]{\left\Vert#1\right\Vert}
\newcommand{\abs}[1]{\left\vert#1\right\vert}
\newcommand{\Unif}[1]{\mathbb{U}(#1)}
\newcommand{\floor}[1]{\lfloor#1\rfloor}
\newcommand{\ceil}[1]{\lceil #1 \rceil}
\def\var{\mathrm{Var}}
\def\BV{\mathrm{HK}}
\def\mrd{\mathrm{d}}
\newcommand{\nat}{\mathbb{N}}
\newcommand{\mbe}{\mathbb{E}}
\begin{document}

\title[QMC for quantile and expected shortfall]{Convergence analysis of quasi-Monte Carlo sampling for quantile and expected shortfall}


\author{Zhijian He}
\address{School of Mathematics, South China University of Technology,  Guangzhou  510641, China}
\curraddr{}
\email{hezhijian@scut.edu.cn}
\author{Xiaoqun Wang}
\address{Department of Mathematical Sciences, Tsinghua University,  Beijing 100084, China}
\email{wangxiaoqun@mail.tsinghua.edu.cn}
\thanks{This work was supported by the National Science Foundation of
	China under Grant No. 71601189 and the National Key R\&D Program of China under Grant No. 2016QY02D0301.}

\subjclass[2010]{Primary 65D30, 65C05}

\date{}

\keywords{quasi-Monte Carlo method, quantile, value-at-risk, expected shortfall, conditional value-at-risk}

\begin{abstract}
	Quantiles and expected shortfalls are usually used to measure risks of stochastic systems, which are often estimated by Monte Carlo methods. This paper focuses on the use of quasi-Monte Carlo (QMC) method, whose convergence rate is asymptotically better than Monte Carlo in the numerical integration. We first prove the convergence of QMC-based quantile estimates under very mild conditions, and then establish a deterministic error bound of $O(N^{-1/d})$ for the quantile estimates, where $d$ is the dimension of the QMC point sets used in the simulation  and $N$ is the sample size. Under certain conditions, we show that the mean squared error (MSE) of the randomized QMC estimate for expected shortfall  is $o(N^{-1})$.
	Moreover, under stronger conditions the MSE can be improved to $O(N^{-1-1/(2d-1)+\epsilon})$ for arbitrarily small $\epsilon>0$.
\end{abstract}

\maketitle

\section{Introduction}
Many application areas use quantiles or expected shortfalls to measure
risks of stochastic systems. For instance, in the financial industry, a quantile (known as value-at-risk) plays an important role for quantifying and managing portfolio risk. On the other hand, expected shortfall (known as conditional value-at-risk) may provide incentives for risk managers to take into account tail risks beyond quantile. We refer to \cite{hong:2014} for a review on the two measures. This paper focuses on estimating quantiles and expected shortfalls via simulation-based methods. Monte Carlo (MC) is a natural method to estimate  them. However, the MC approach is often criticized for time-consuming, since value-at-risk estimation is often relevant to rare events simulation. That usually calls for a large number of runs to get accurate estimation. To address this issue, various variance reduction techniques are employed to increase the accuracy of MC.  Importance sampling (IS) is a promising variance reduction technique for value-at-risk estimation (see, e.g., \cite{glas:2000,glyn:1996}).

Beyond the use of MC, Avramidis and Wilson \cite{avra:wils:1998} proposed correlation-induction techniques to improve quantile estimation based on Latin hypercube sampling (LHS). They showed that the correlation-inducted LHS estimator is  asymptotically normal and unbiased with smaller variance than that of the crude MC. Subsequently, Jin et al. \cite{jin:2003} modified the correlation-inducted LHS estimator of \cite{avra:wils:1998} and proposed a new quantile estimator based on an indirect means of realizing full stratification of \cite{owen:1998} that reuses samples. They showed that the error probability for the stratified quantile estimator is zero for sufficiently large, but finite sample size $N$. Moreover, in some special cases, the convergence rate  is $O(N^{-1})$, as opposed to the crude MC rate $O(N^{-1/2})$.  However, the stratified quantile estimator requires sample sizes that grow exponentially with the dimension of the problem. 

Quasi-Monte Carlo (QMC) methods are deterministic versions of the MC methods, and have an asymptotically faster convergence rate than MC as shown in the field of numerical integration. 
It is straightforward to use QMC methods for estimating quantiles and expected shortfalls. Particularly, Papageorgiou and Paskov \cite{papa:pask:1999} observed from empirical studies that QMC methods provide a highly efficient alternative to MC for quantile calculation. Jin and Zhang \cite{jin:zhang:2006} aimed at smoothing QMC estimators via Fourier transformation so that the faster convergence rate of QMC methods can be reclaimed. To the best of our knowledge, the convergence and the rates of convergence for plain QMC in estimating quantile and expected shortfall are still unclear. 

In this paper, we focus on the use of QMC and randomized QMC (RQMC) for  estimating quantile and expected shortfall. We first prove the convergence of  QMC-based quantile estimates, and establish some useful error bounds for assessing the error rate. We then provide an error bound for the expected shortfall estimate, and find that the efficiency of the expected shortfall estimate is strongly tied to the efficiency of (R)QMC quadrature for a specific discontinuous function and a specific  function with kinks. Under mild conditions, we show that the mean squared error (MSE) of the RQMC-based expected shortfall estimate is $o(N^{-1})$, which is asymptotically better than plain MC and LHS.
Moreover, under stronger conditions the MSE can be improved to $O(N^{-1-1/(2d-1)+\epsilon})$ for arbitrarily small $\epsilon>0$, where $d$ is the dimension of the problem.

The rest of this paper is organized as follows. In Section~\ref{sec:defns}, we introduce some background on quantile estimation and some preliminary results on QMC methods. In Section~\ref{eq:main}, we study the convergence and the convergence rate of  QMC-based quantile estimate. In Section~\ref{sec:cvar}, we study the MSE of  QMC-based   expected shortfall estimate. In Section~\ref{sec:num}, we 
perform a numerical study for stochastic network models for which our theoretical results can be applied.  Section~\ref{sec:concl} concludes this paper.

\section{Preliminaries}\label{sec:defns}
Let $X$ be a real-valued random variable of interest with a cumulative distribution function (CDF)  $F(x)$. For instance, $X$ is the loss or profit of a portfolio over a given holding period. We are interested in the left tail of the distribution of $X$. For a fixed $p\in(0,1)$, the quantity
\begin{equation}\label{eq:defvar}
v := F^{-1}(p)=\inf\{x\in \mathbb{R}|F(x)\geq p\},
\end{equation}
is called the $p$'th quantile of $X$ (or the value-at-risk of $X$ in the context of risk management). The expected shortfall of $X$ is defined as
\begin{equation}\label{eq:defcvar}
c := v-\frac 1 p\mbe[(v-X)^+],
\end{equation}
where $x^+:=\max\{x,0\}$. The expected shortfall is also known as the tail
conditional expectation or conditional value-at-risk. Assume that the variable $X$ can be simulated easily. Our goal is to estimate the quantile $v$ and the expected shortfall $c$ by means of simulation.

In the MC setting, the CDF $F(x)$ of $X$ can be estimated by the empirical CDF
\begin{equation}\label{eq:ecdf}
\hat{F}_N(x)=\frac 1N\sum_{i=1}^N\bm{1}\{X_i\leq x\},
\end{equation}
where $X_i$'s are independent and identically distributed random replications of $X$.  The quantile $v$  is then estimated by
\begin{align}
\hat{v}_N &= \hat{F}^{-1}_N(p)=\inf\{x\in \mathbb{R}|\hat F_N(x)\geq p\}.\label{eq:est}
\end{align}
Let $X_{(i)}$ be the $i$th-order statistic of $X_1,\dots,X_N$. It is easy to see that $\hat{v}_N = X_{(\ceil{pN})}$, where $\ceil{x}$ denotes the smallest integer no less than $x$. The corresponding estimate of the expected shortfall $c$ is given by
\begin{equation}
\hat{c}_N = \hat{v}_N-\frac{1}{pN}\sum_{i=1}^N(\hat{v}_N-X_i)^+.\label{eq:estCVaR}
\end{equation}

Serfling \cite{serf:1980} showed that $\hat{v}_N\to v$ with probability 1 (w.p.1) as $N\to\infty$ under very mild assumptions. If $X$ has a continuous density $f_X(\cdot)$ in a neighborhood of $v$ and $f_X(v)>0$, Serfling \cite{serf:1980} further showed that $\hat{v}_N$ is asymptotically normally distributed. For the expected shortfall estimate \eqref{eq:estCVaR}, Trindade et al.
\cite{trin:2007} found that under certain conditions, $\hat{c}_N\to c$ w.p.1 as $N\to\infty$, and $\hat{c}_N$ is asymptotically normally distributed.

MC is often criticized for its slow convergence. QMC has the potential to improve the convergence rate. We now turn to the regime of QMC in estimating quantiles and expected shortfalls. To start with, let's consider the problem of estimating an integral over the unit cube $[0,1)^d$
\begin{equation*} 
I(f) = \int_{[0,1)^d}f(\bm{u})\mrd \bm{u}.
\end{equation*}
QMC quadrature rule takes the average
\begin{equation}\label{eq:deterestimate}
\hat{I}_N(f)= \frac 1 N \sum_{i=1}^N f(\bm{u}_i),
\end{equation}
where $\bm{u}_1,\dots,\bm{u}_N$ are carefully chosen points in $[0,1)^d$.  The Koksma-Hlawka inequality gives a deterministic error bound for the quadrature rule \eqref{eq:deterestimate}
\begin{equation}\label{K-H}
\abs{\hat{I}_N(f)-I(f)}\leq V_{\BV}(f)D^{*}_N(\mathcal{P}),
\end{equation}
where $\mathcal{P}:=\{\bm{u}_1,\dots,\bm{u}_N\}$, $V_{\BV}(f)$ is the variation of $f(\bm{u})$ in the sense of Hardy and Krause, and $D^{*}_N(\mathcal{P})$ is the star-discrepancy of points in $\mathcal{P}$; see \cite{nied:1992} for details. There are many ways to construct point sets such that $D^{*}_N(\mathcal{P})=O(N^{-1}(\log N)^d)$. As a result, the QMC error is $O(N^{-1}(\log N)^d)$ for integrands with bounded variation in the sense of Hardy and Krause (BVHK). In this paper, we restrict our attention to $(t,d)$-sequences or $(t,m,d)$-nets in base $b\geq 2$ (see the definitions below). 

\begin{definition}\label{defn1}
	An elementary interval in base $b$ is a subset of $[0,1)^d$ of the form
	\begin{equation}\label{eq:EI}
	E= \prod_{j=1}^d\bigg[\frac{t_j}{b^{k_j}},\frac{t_j+1}{b^{k_j}}\bigg),
	\end{equation}
	where $k_j\in \nat $, $t_j\in \nat $ with $t_j<b^{k_j}$ for $j=1,\dots,d$.
\end{definition}

The elementary interval \eqref{eq:EI} is a hyperrectangle of volume $b^{-\sum_{j=1}^d k_j}$. For given $k_j$, the unit cube $[0,1)^d$ is partitioned into  $b^{\sum_{j=1}^d k_j}$ elementary intervals of the form \eqref{eq:EI}. 

\begin{definition}\label{defnnets}
	Let $t$ and $m$ be nonnegative integers with $t\leq m$. A point set of $b^m$ points $\bm{u}_1,...,\bm{u}_{b^m} \in [0,1)^d$ is a $(t,m,d)$-net in base $b$ if every elementary interval in base $b$ of volume $b^{t-m}$ contains exactly $b^t$ points of the point set.
\end{definition}
\begin{definition}
	Let $t$ be a nonnegative integer. An infinite sequence $\bm u_i\in [0,1)^d$ is a $(t,d)$-sequence in base $b$ if  the finite point set $\bm{u}_{kb^m+1},...,\bm{u}_{(k+1)b^m}$ is a $(t,m,d)$-net in base $b$ for all $k\geq 0$ and $m\geq t$.
\end{definition}

For QMC, it is important to obtain an estimate of the quadrature error $|\hat{I}_N(f)-I(f)|$. But both the variation and the star discrepancy in the upper bound \eqref{K-H} are very hard to compute, and the upper bound is restricted to functions of finite variation. Instead, one can randomize the  points $\bm{u}_1,\dots,\bm{u}_N$ and treat the random version of the quadrature $\hat{I}_N(f)$ in \eqref{eq:deterestimate} as an RQMC quadrature rule.
Usually, the randomized points are uniformly distributed over $[0,1)^d$, and the low discrepancy property of the points is preserved under the randomization (see \cite{lecu:lemi:2005} and Chapter 13 of the monograph \cite{dick:pill:2010}  for a survey of various RQMC methods). In this paper, we focus on the scrambling technique proposed by \cite{owen:1995} to randomize  $(t,d)$-sequences or $(t,m,d)$-nets.

QMC methods are designed to sample $d$-dimensional vectors that are uniformly distributed on the unit cube $[0,1)^d$. To fit into the setting of quantile estimation, one needs to know the mechanism of sampling the target variable $X$ via standard uniform distributed variables. In what follows, we assume that the target variable $X$ can be generated by 
\begin{equation}\label{eq:map}
X=\phi(\bm u ),\ \bm u\sim \Unif{[0,1)^d},
\end{equation}
where the function $\phi:[0,1)^d\to \mathbb{R}$ is easily computed. For practical problems, it may be easy to obtain the mapping $\phi$ by using the multivariate inverse transformation proposed by Rosenblatt \cite{Rose:1952}. In the QMC setting, we shall rewrite the empirical CDF \eqref{eq:ecdf} as
\begin{equation}\label{eq:qmcecdf}
\hat{F}_N(x) = \frac 1 N\sum_{i=1}^N\bm 1\{\phi(\bm u_i)\le x\},
\end{equation}
where $\bm{u}_1,\dots,\bm{u}_N$ are QMC or RQMC points. The QMC estimate of the quantile $v$  is then obtained by the formula \eqref{eq:est}. The expected shortfall estimate \eqref{eq:estCVaR} is then replaced by

\begin{equation}
\hat{c}_N = \hat{v}_N-\frac{1}{pN}\sum_{i=1}^N(\hat{v}_N-\phi(\bm u_i))^+.\label{eq:es}
\end{equation}
Note that the empirical CDF \eqref{eq:qmcecdf} can be viewed as a QMC quadrature rule $\hat{I}_N(f)$ for the indicator function $f(\bm u)=\bm 1\{\phi(\bm u)\le x \}$. However, the upper bound \eqref{K-H} of the error does not provide useful information for this case because the variation $V_{\BV}(f)$ is usually infinite for discontinuous integrands \cite{owen:2005}. 

\section{Convergence analysis for QMC quantile estimation}\label{eq:main}

In this section, we first show the convergence of QMC estimates for quantile estimation under very mild conditions. Then we give a deterministic error bound for QMC estimates under some relatively stronger conditions.

\begin{theorem}\label{thm:qmcconsistency}
	Let $X$ be a random variable with CDF $F(x)$. The empirical CDF $\hat F_N(x)$ given by \eqref{eq:qmcecdf} is based on QMC points. Assume that 
	\begin{enumerate}[(i)]
		\item $v=F^{-1}(p)$ is the unique solution $x$ of $F(x-)\le p\le F(x)$, and
		\item $\lim_{N\to\infty}\hat F_N(x)=F(x)$ for all $x\in\mathbb{R}$.
	\end{enumerate}
	Then $\hat{v}_N=\hat F_N^{-1}(p)\to v$ as $N\to \infty$.
\end{theorem}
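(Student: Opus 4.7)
The plan is to reduce consistency of $\hat v_N$ to pointwise control of $\hat F_N$ at two points straddling $v$, in the spirit of the classical proof of consistency of sample quantiles (with hypothesis (ii) playing the role of the Glivenko--Cantelli theorem). I would fix an arbitrary $\epsilon>0$ and aim to show that $|\hat v_N-v|\le\epsilon$ for all sufficiently large $N$.

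The first step, and the only one invoking hypothesis (i), would be to establish the strict inequalities $F(v-\epsilon)<p<F(v+\epsilon)$. The left inequality is immediate from $v=\inf\{x:F(x)\ge p\}$, since any $x<v$ satisfies $F(x)<p$. For the right inequality I would argue by contradiction: the monotonicity of $F$ gives $F(v+\epsilon)\ge F(v)\ge p$, so it remains to exclude equality. If $F(v+\epsilon)=p$, then right-continuity of $F$ forces $F((v+\epsilon)-)\le F(v+\epsilon)=p$, and $v+\epsilon$ would be a second solution of $F(x-)\le p\le F(x)$, contradicting the uniqueness assumed in (i).

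The second step invokes hypothesis (ii) at the two fixed arguments $v\pm\epsilon$. From $\hat F_N(v-\epsilon)\to F(v-\epsilon)<p$ I would deduce $\hat F_N(v-\epsilon)<p$ for all sufficiently large $N$; since $\hat F_N$ is nondecreasing, this removes every $x\le v-\epsilon$ from the set $\{x:\hat F_N(x)\ge p\}$, hence $\hat v_N\ge v-\epsilon$. Symmetrically, $\hat F_N(v+\epsilon)\to F(v+\epsilon)>p$ yields $\hat F_N(v+\epsilon)\ge p$ eventually, so that $v+\epsilon\in\{x:\hat F_N(x)\ge p\}$ and therefore $\hat v_N\le v+\epsilon$. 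Combining these two one-sided bounds gives $|\hat v_N-v|\le\epsilon$ for $N$ large, and letting $\epsilon\downarrow 0$ finishes the argument.

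I do not anticipate a substantive obstacle: once (ii) is in hand the argument is entirely deterministic, and the QMC nature of the points $\bm u_i$ enters only through that single hypothesis. The one delicate point is the second half of the uniqueness claim, which would fail if $F$ were flat at level $p$ immediately to the right of $v$ — precisely the pathology excluded by (i). This indicates that (i) is essentially the minimal regularity one can impose for consistency to follow from pointwise convergence of the empirical CDF, and suggests that any weakening of (i) would require a correspondingly stronger version of (ii).
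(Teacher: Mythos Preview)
Your proof is correct and takes essentially the same approach as the paper: both derive the strict inequalities $F(v-\epsilon)<p<F(v+\epsilon)$ from hypothesis~(i) and then invoke~(ii) at the two points $v\pm\epsilon$ to trap $\hat v_N$ in $[v-\epsilon,v+\epsilon]$ for large $N$. The only cosmetic difference is that the paper packages the conclusion as a contradiction via a non-convergent subsequence and routes through the bound $p\le\hat F_N(\hat v_N)\le p+1/N$, whereas your direct argument bypasses that bound entirely.
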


\begin{proof}
	By the definition of $\hat v_N$ in \eqref{eq:est}, we have $$ p\le \hat F_{N}(\hat{v}_{N})\le p+1/N$$ for all $N\ge 1$. By the uniqueness condition (i) and the definition of $v$ in \eqref{eq:defvar}, we find that for any $\eta>0$,
	\begin{equation}
	F(v-\eta)<p<F(v+\eta).\label{eq:unique}
	\end{equation}
	
	Assume that $\hat{v}_N$ does not converge to $v$. Then there exists an $\epsilon>0$ and an infinite sequence of positive integers $n_i$ with $\lim_{i\to\infty}n_i=\infty$ such that
	$|\hat{v}_{n_i}-v|\ge \epsilon$ for all $i$. If $\hat{v}_{n_i}\ge v+\epsilon$, then
	$\hat F_{n_i}(v+\epsilon)\le \hat F_{n_i}(\hat{v}_{n_i})\le p+1/n_{i}.$ By condition (ii),
	$$F(v+\epsilon)=\lim_{i\to\infty} \hat F_{n_i}(v+\epsilon)\le p.$$
	This leads to a contradiction because $F(v+\epsilon)>p$ by using \eqref{eq:unique}. 
	
	On the other hand, if $\hat{v}_{n_i}\le v-\epsilon$, then $\hat F_{n_i}(v-\epsilon)\ge \hat F_{n_i}(\hat{v}_{n_i})\ge p$. By condition (2),
	$$F(v-\epsilon)=\lim_{i\to\infty} \hat F_{n_i}(v-\epsilon)\ge p.$$
	That also leads to a contradiction because $F(v-\epsilon)<p$. As a result, $\hat v_N$ converges to $v$ as $N$ goes to infinity. 
\end{proof}

The uniqueness condition (i) is also the minimal requirement for establishing the strong consistency of the associated MC estimate \cite[p. 75]{serf:1980}. Condition (ii) implies that the empirical CDF $\hat F_N(x)$ converges to the true CDF $F(x)$ for all $x\in \mathbb{R}$ in the QMC setting. Recall that the random variable $X$ can be generated via the mapping \eqref{eq:map}.
This calls for the Jordan measurability of the set
\begin{equation*}\label{eq:omegax}
\Omega_x:=\{\bm u\in[0,1)^d|\phi(\bm u)\le x \}
\end{equation*}
for all $x\in \mathbb{R}$.

\begin{corollary}\label{thm:rqmcconsistency}
	Suppose that the point set $\{\bm u_1,\dots,\bm u_N\}$ used in \eqref{eq:qmcecdf}  is the first $N$ points of a $(t,d)$-net in base $b\ge 2$. If $v=F^{-1}(p)$ is the unique solution $x$ of $F(x-)\le p\le F(x)$ and $\Omega_x$ is Jordan measurable for all $x\in \mathbb{R}$, then $\hat{v}_N\to v$ as $N\to \infty$. 
\end{corollary}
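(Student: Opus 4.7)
The plan is to invoke Theorem~\ref{thm:qmcconsistency}: condition (i) of that theorem coincides with one of our hypotheses, so the only substantive task is to verify condition (ii), i.e., $\hat F_N(x)\to F(x)$ for every fixed $x\in\mathbb{R}$. Using the sampling mechanism \eqref{eq:map} one rewrites $\hat F_N(x)=\frac{1}{N}\sum_{i=1}^N \mathbf{1}_{\Omega_x}(\bm u_i)$ and $F(x)=\int_{[0,1)^d}\mathbf{1}_{\Omega_x}(\bm u)\,\mrd\bm u$, so pointwise convergence of the empirical CDF reduces to the convergence of QMC averages of the indicator $\mathbf{1}_{\Omega_x}$ to its integral.

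Next I would appeal to two standard facts about low-discrepancy sequences. First, the initial $N$ points of a $(t,d)$-sequence in base $b$ have star discrepancy $D_N^{\ast}\to 0$ (in fact $O(N^{-1}(\log N)^d)$), so the sequence is uniformly distributed in $[0,1)^d$. Second, a classical characterization (see, e.g., Niederreiter's monograph) states that uniform distribution is equivalent to convergence of QMC averages to the integral for every Riemann integrable function on $[0,1)^d$. Since $\Omega_x$ is assumed Jordan measurable, its boundary $\partial\Omega_x$ has Lebesgue measure zero, which is exactly the criterion for Riemann integrability of $\mathbf{1}_{\Omega_x}$. Combining the two facts yields $\hat F_N(x)\to F(x)$ for every $x\in\mathbb{R}$, and Theorem~\ref{thm:qmcconsistency} then gives $\hat v_N\to v$.

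The argument is largely bookkeeping; the only nontrivial ingredient is the sandwich underlying the Riemann-integrability result, and it is here that the Jordan assumption is used in an essential way. Given $\epsilon>0$, one chooses finite unions of elementary boxes $A\subseteq \Omega_x\subseteq B$ with $\mathrm{Leb}(B\setminus A)<\epsilon$, and then applies $D_N^{\ast}\to 0$ to the finitely many boxes making up $A$ and $B$ to squeeze $\hat F_N(x)$ into $[\mathrm{Leb}(A)-o(1),\,\mathrm{Leb}(B)+o(1)]$, which lies within $2\epsilon$ of $F(x)=\mathrm{Leb}(\Omega_x)$ for all sufficiently large $N$. There is no genuine obstacle beyond correctly invoking this sandwich; in particular, one does not need $\mathbf{1}_{\Omega_x}$ to have bounded variation in the sense of Hardy and Krause, so the Koksma--Hlawka bound \eqref{K-H} is not required.
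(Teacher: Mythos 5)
Your proposal is correct and follows essentially the same route as the paper: reduce to condition (ii) of Theorem~\ref{thm:qmcconsistency}, note that Jordan measurability of $\Omega_x$ makes $\mathbf{1}_{\Omega_x}$ Riemann integrable, and invoke the classical fact that QMC averages along a uniformly distributed (low-discrepancy) sequence converge to the integral of any Riemann integrable function. The extra detail you give on the box-sandwich argument behind that classical fact is sound but not needed beyond citing Niederreiter, which is all the paper does.
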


\begin{proof}
	Let $f(\bm u)=\bm{1}\{\phi(\bm u)\le x\}=\bm{1}\{\bm u\in\Omega_x\}$. Since $\Omega_x$ is Jordan measurable, $f(\bm u)$ is Riemann integrable. Note that $\hat{I}_N(f)=\hat F_N(x)$ and  $I(f)=F(x)$. 
	It is known that the QMC quadrature $\hat{I}_N(f)$ converges to $I(f)$ for all Riemann-integrable functions $f$ (see, e.g., \cite{nied:1992}). This implies that $\lim_{N\to\infty}\hat F_N(x)=F(x)$ for all $x\in\mathbb{R}$. Applying Theorem~\ref{thm:qmcconsistency} completes the proof.
\end{proof}

Note that Riemann integrability of $\phi(\bm u)$ may not lead to Riemann integrability of the indicator function $1\{\phi(\bm u)\le x \}$. Chen et al. \cite{chen:2011} gave such an example by using Thomae's function. By Lebesgue's theorem (see \cite{mars:1993}), $\bm 1\{\phi(\bm u)\le x \}$ is Riemann integrable (or equivalently, $\Omega_x$ is Jordan measurable) iff $\lambda_d(\partial \Omega_x)=0$, where $\lambda_d(\cdot)$ is the Lebesgue measure in $\mathbb{R}^d$. 

\begin{definition}\label{parbody}
	For a set $A\subset \mathbb{R}^d$, the outer parallel body  of $A$  at distance $\epsilon$ is defined as
	$$(A)_\epsilon:=\{\bm{x}\in \mathbb{R}^d|\norm{\bm{x}-\bm{y}}_2\leq \epsilon\ \mathrm{for \ some}\ \bm{y}\in A\},$$
	where $\norm{\cdot}_2$ denotes the Euclidean norm. When $A=\varnothing$, we use a convention that $(A)_\epsilon=\varnothing$ for any $\epsilon>0$.
\end{definition}

Let $g=g(\epsilon)$ a positive
nondecreasing function defined for all $\epsilon>0$ and satisfying $\lim_{\epsilon\to 0_+}g(\epsilon)=0$. Then we let $\mathcal{M}_g$ be the family of all Lebesgue-measurable $\Omega\subset[0,1]^d$ for which
\begin{equation*}
\lambda_d((\partial \Omega)_\epsilon)\le g(\epsilon) \text{ for all }\epsilon>0.
\end{equation*}
Every $\Omega\in \mathcal{M}_g$ is actually Jordan measurable. Conversely, every Jordan measurable subset of $[0,1)^d$ belongs to $\mathcal{M}_g$ for a suitable function $g$ (see \cite[pp. 168-169]{nied:1992}). To establish an error bound of quantile estimate, we need a stronger condition that $\lambda_d((\partial \Omega_x)_\epsilon)$ has a common upper bound $g(\epsilon)$ for $x$ in a neighborhood of $v$. For $\delta>0$, denote $B(x,\delta):=\{t\in \mathbb{R}||t-x|\le \delta\}$ as a $\delta$-neighborhood of $x$.

\begin{assumption}\label{assum:density}
	Assume that $X$ has a  density $f_X(x)$ in a neighborhood of $v$ and $f_X(x)$ is positive and continuous at $v$.
\end{assumption}
\begin{assumption}\label{assum:bound}
	Assume that there exist a positive
	nondecreasing function $g(\epsilon)$ satisfying $\lim_{\epsilon\to 0_+}g(\epsilon)=0$ and $\delta, \epsilon_0>0$ such that  $$\sup_{x\in B(v,\delta)}\lambda_d((\partial \Omega_x)_\epsilon)\le g(\epsilon)$$ for any $\epsilon\le\epsilon_0$.
\end{assumption}

\begin{theorem}\label{thm:varerror}
	Suppose that the point set $\{\bm u_1,\dots,\bm u_N\}$ used in \eqref{eq:qmcecdf}  is a $(t,m,d)$-net in base $b\ge 2$, where $N=b^m$. If Assumptions~\ref{assum:density} and \ref{assum:bound} are satisfied, then 
	\begin{equation*}
	\abs{\hat{v}_N- v}\le\frac{2g(\sqrt{d}b^{1+t/d}N^{-1/d})}{f_X(v)}
	\end{equation*}
	for $N$ large enough.
\end{theorem}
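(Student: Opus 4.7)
The plan is to first bound $|\hat F_N(x)-F(x)|$ uniformly for $x$ in a neighborhood of $v$ by exploiting the exact equidistribution of a $(t,m,d)$-net over elementary intervals, and then to convert that CDF-level bound into a bound on $\hat v_N - v$ through the pointwise positivity of the density $f_X$ at $v$.

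First, write $m-t=dq+r$ with $0\le r<d$ and partition $[0,1)^d$ into $b^{m-t}$ elementary intervals of the form $\prod_{j=1}^d [t_j/b^{k_j},(t_j+1)/b^{k_j})$, taking $k_j=q+1$ for $r$ of the coordinates and $k_j=q$ for the remaining $d-r$, so that $\sum_j k_j=m-t$. Each such box has volume $b^{t-m}=b^t/N$, and by Definition~\ref{defnnets} contains exactly $b^t$ of the $N$ points. Since $k_j\ge q\ge (m-t)/d-1$, the largest edge length is $b^{-q}\le b^{1+t/d}N^{-1/d}$, so every box has Euclidean diameter at most $\epsilon_N:=\sqrt{d}\,b^{1+t/d}N^{-1/d}$. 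For $x\in B(v,\delta)$ define $A_x^-$ (respectively $A_x^+$) as the union of boxes contained in (respectively intersecting) $\Omega_x$. Any box $B\in A_x^+\setminus A_x^-$ contains a point of $\Omega_x$ and a point of $\Omega_x^c$; the line segment between them is in $B$ and must cross $\partial\Omega_x$, so $A_x^+\setminus A_x^-\subseteq(\partial\Omega_x)_{\epsilon_N}$. Because each box contributes its Lebesgue volume to the empirical measure, both $\hat F_N(x)$ and $F(x)=\lambda_d(\Omega_x)$ lie in $[\lambda_d(A_x^-),\lambda_d(A_x^+)]$, and hence
\begin{equation*}
|\hat F_N(x)-F(x)|\le \lambda_d\bigl((\partial\Omega_x)_{\epsilon_N}\bigr)\le g(\epsilon_N)
\end{equation*}
whenever $N$ is large enough that $\epsilon_N\le \epsilon_0$, by Assumption~\ref{assum:bound}.

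For the second step, use Assumption~\ref{assum:density} to pick $\delta_1\in(0,\delta]$ and a constant $\alpha>f_X(v)/2$ with $f_X\ge\alpha$ on $B(v,\delta_1)$; continuity of $F$ at $v$ together with the uniqueness condition gives $F(v)=p$. Set $r_N:=2g(\epsilon_N)/f_X(v)$, which satisfies $r_N\le \delta_1$ once $N$ is sufficiently large. At $x_1=v+r_N$, the density lower bound yields $F(x_1)-p\ge\alpha r_N=(2\alpha/f_X(v))g(\epsilon_N)>g(\epsilon_N)$, hence $\hat F_N(x_1)\ge F(x_1)-g(\epsilon_N)>p$ and therefore $\hat v_N\le x_1$. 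Symmetrically, $F(x_2)-p\le -\alpha r_N<-g(\epsilon_N)$ at $x_2=v-r_N$ gives $\hat F_N(x_2)<p$, so $\hat v_N\ge x_2$. Combining the two one-sided estimates yields $|\hat v_N-v|\le r_N=2g(\epsilon_N)/f_X(v)$.

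The main obstacle is the elementary-box construction: one must choose the $k_j$ so that $\sum_j k_j=m-t$ (so the net distributes $b^t$ points exactly into each box) while simultaneously keeping the maximum side length bounded by $b^{1+t/d}N^{-1/d}$; the splitting $m-t=dq+r$ is what achieves both constraints at once, and produces precisely the constant $\sqrt{d}\,b^{1+t/d}$ that appears inside $g(\cdot)$ in the theorem. Everything afterwards is bookkeeping: Assumption~\ref{assum:bound} provides a common boundary-measure bound uniformly over $x\in B(v,\delta)$, and Assumption~\ref{assum:density} lets us turn the local affine approximation of $F$ into a quantile estimate with no residual $1/N$ term, because the tolerance $r_N$ is chosen exactly to leave an $\alpha/f_X(v)-\tfrac12>0$ margin against the $g(\epsilon_N)$-sized CDF error on either side of $v$.
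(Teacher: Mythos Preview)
Your proof is correct and follows essentially the same route as the paper's: bound $\sup_{x\in B(v,\delta)}|\hat F_N(x)-F(x)|$ by $g(\epsilon_N)$ via the equidistribution of the net over an elementary-interval partition whose boxes have diameter at most $\epsilon_N=\sqrt{d}\,b^{1+t/d}N^{-1/d}$, then invert through the density lower bound $f_X\ge f_X(v)/2$ near $v$. The only notable difference is that you spell out the $m-t=dq+r$ construction of the partition explicitly, whereas the paper defers this to Lemma~4.1 of \cite{he:wang:2015}; your sandwich $A_x^-\subseteq\Omega_x\subseteq A_x^+$ and the paper's count over $\mathcal{T}=\{k:E_k\cap\partial\Omega_x\neq\varnothing\}$ are two phrasings of the same boundary-counting argument.
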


\begin{proof}
	By Definition~\ref{defnnets}, there exist $K=b^{m-t}$ disjoint elementary \mbox{intervals} $E_1,\dots,E_K$ with volume $b^{t-m}$ such that all $E_k$ contain exactly $b^t$ points of the $(t,m,d)$-net in base $b$.
	Let $\mathcal{T}:=\{k=1,\dots,K|E_k\cap \partial\Omega_x\neq \varnothing\}$, and denote $\#(A)$ as the number of the points of the $(t,m,d)$-net contained in the set $A$. By the fairness of the elementary intervals, we have 
	\begin{align*}
	\abs{\hat{F}_N(x)-F(x)}&=\abs{\frac{1}{N}\sum_{i=1}^{N}\bm 1\{\bm u_i\in \Omega_x\}-\lambda_d(\Omega_x)}\\
	&=\abs{\sum_{k\in \mathcal{T}}\frac{\#(E_k\cap \Omega_x)}{N}-\sum_{k\in \mathcal{T}}\lambda_d(E_k\cap \Omega_x)}\\
	&\le \max\left\lbrace\sum_{k\in \mathcal{T}}\frac{\#(E_k\cap \Omega_x)}{N},\sum_{k\in \mathcal{T}}\lambda_d(E_k\cap \Omega_x)\right\rbrace\\
	&\le \frac{b^t\abs{\mathcal{T}}}{N}.
	\end{align*}
	Similarly to the proof of Lemma 4.1 in \cite{he:wang:2015}, one can choose $E_k$ with length as small as possible. By doing so, the  length of $E_k$ is  no larger than $$\sqrt{d}b^{-\floor{(m-t)/d}}<\sqrt{d}b^{1+t/d}N^{-1/d}=:r(N).$$ Let's assume that $r(N)<\epsilon_0$ by taking large enough $N$. By Assumption~\ref{assum:bound},
	\begin{equation*}
	\abs{\mathcal{T}}\le \frac{\lambda_d((\partial \Omega_x)_r)}{b^{t-m}}\le b^{-t}Ng(r(N)).
	\end{equation*}
	Therefore, we have 
	\begin{equation}
	\sup_{x\in B(v,\delta)}\abs{\hat{F}_N(x)-F(x)}\le g(r(N)).\label{eq:sbounds}
	\end{equation}

	By Assumption~\ref{assum:density}, there exists $\delta'>0$ such that $f_X(x)\ge f_X(v)/2$  for all $x\in B(v,\delta')$.	Let $\epsilon=2g(r(N))/f_{X}(v)$. Since $r(N)\to 0$ as $N\to\infty$, there exists a $N_0(\delta,\delta',\epsilon_0)$ such that $r(N)\le \epsilon_0$ and $\epsilon<\min(\delta,\delta')$ for any $N\ge N_0$. By \eqref{eq:sbounds}, we  have 
	$$\hat{F}_N(v+\epsilon)\ge F(v+\epsilon)- g(r(N)).$$By the mean value theorem, we obtain
	$$F(v+\epsilon)-F(v)=f_X(\xi)\epsilon$$
	for some $\xi\in [v,v+\epsilon]\subset B(v,\delta')$. Since $f_X(\xi)\ge f_X(v)/2$ and $F(v)=\alpha$, 
	$$\hat{F}_N(v+\epsilon)\ge F(v)+f_X(\xi)\epsilon-g(r(N))\ge \alpha+f_X(v)\epsilon/2-g(r(N))=\alpha.$$
	We therefore have $\hat{v}_{N}\le v+\epsilon$. Conversely, we can prove in a similar manner that $\hat{v}_{N}\ge v-\epsilon$. Consequently, $|\hat{v}_{N}- v|\le \epsilon=2g(r(N))/f_X(v)$ all $N\ge N_0$.
\end{proof}

If the set $\Omega_x$  is convex for all $x\in B(v,\delta)$, then $\Omega_x\in \mathcal{M}_g$ with $g(\epsilon)=5d\epsilon$ for $\epsilon$ small enough. This is because the volume $\lambda_d((\partial \Omega)_\epsilon)$ for any convex set $\Omega\subset [0,1)^d$ is bounded by that of the case
$\Omega=[0,1)^d$, which is no larger than $5d\epsilon$ for $\epsilon$ small enough (see the proof of Lemma~\ref{lem:lips}). By Theorem~\ref{thm:varerror}, the deterministic error bound for the QMC-based quantile estimate becomes
\begin{equation*}
\abs{\hat{v}_N- v}\le\frac{10d^{3/2}b^{1+t/d}}{f_X(v)}N^{-1/d}.
\end{equation*}
The result may be extended to pseudo-convex sets (see \cite{zhu:2014}). However, the convex conditions on $\Omega_x$ may be restrictive for practical problems. We next show that under the Lipschitz continuity condition on $\phi$, $g(\epsilon)=\kappa\epsilon$ for some constant $\kappa>0$. The same rate $O(N^{-1/d})$ also applies for this case.


\begin{lemma}\label{lem:lips}
	Suppose that $\phi(\bm u)$ is Lipschitz continuous over $[0,1)^d$ with modulus $L>0$. If Assumption~\ref{assum:density} is satisfied, there exist $\epsilon_0,\delta>0$ such that 
	$$\sup_{x\in B(v,\delta)}\lambda_d((\partial \Omega_x)_\epsilon)\le (5d+3f_X(v)L)\epsilon,$$ for any $\epsilon\le\epsilon_0$.
\end{lemma}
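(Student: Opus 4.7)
The plan is to split $\partial\Omega_x$ into its level-set portion $\{\phi=x\}$ and its cube-boundary portion, and to bound the Lebesgue measure of the $\epsilon$-inflation of each piece separately. Since $\phi$ is Lipschitz on $[0,1)^d$ it extends uniquely to a Lipschitz map on $[0,1]^d$ with the same modulus $L$; I will keep writing $\phi$ for this extension. If $\bm u\in\partial\Omega_x$ lies in the interior of the cube, then it is simultaneously a limit of points with $\phi\le x$ and of points with $\phi>x$, so continuity forces $\phi(\bm u)=x$. Consequently
\[
(\partial\Omega_x)_\epsilon \subseteq \bigl(\{\bm u\in[0,1]^d:\phi(\bm u)=x\}\bigr)_\epsilon \cup (\partial[0,1]^d)_\epsilon.
\]

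For the first piece, any point of $(\{\phi=x\})_\epsilon$ lying outside $[0,1]^d$ is automatically within $\epsilon$ of $\partial[0,1]^d$ and is therefore already absorbed by the second term. Inside the cube the Lipschitz inequality gives
\[
\bigl(\{\phi=x\}\bigr)_\epsilon\cap[0,1]^d \subseteq \{\bm u\in[0,1]^d:|\phi(\bm u)-x|\le L\epsilon\},
\]
whose Lebesgue measure equals $F(x+L\epsilon)-F((x-L\epsilon)-)$. Invoking Assumption~\ref{assum:density}, I fix $\delta,\epsilon_0>0$ small enough that $f_X$ exists on $B(v,\delta+L\epsilon_0)$ and satisfies $f_X(y)\le\tfrac{3}{2}f_X(v)$ there (possible by continuity of $f_X$ at $v$); then for every $x\in B(v,\delta)$ and $\epsilon\le\epsilon_0$,
\[
F(x+L\epsilon)-F((x-L\epsilon)-)=\int_{x-L\epsilon}^{x+L\epsilon}f_X(y)\,\mrd y \le 3 f_X(v) L\epsilon.
\]

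For the cube-boundary piece, the elementary inclusion $(\partial[0,1]^d)_\epsilon\subseteq[-\epsilon,1+\epsilon]^d\setminus(\epsilon,1-\epsilon)^d$, valid for $\epsilon<1/2$, yields $\lambda_d((\partial[0,1]^d)_\epsilon)\le (1+2\epsilon)^d-(1-2\epsilon)^d$. The odd-order binomial expansion of the right-hand side is $4d\epsilon+O(\epsilon^3)$, so after shrinking $\epsilon_0$ once more the $O(\epsilon^3)$ remainder is absorbed into an extra $d\epsilon$ and the bound $5d\epsilon$ holds. Adding the two estimates completes the proof with the stated constant $5d+3 f_X(v) L$.

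The main obstacle is the first, purely geometric step: one has to verify carefully that every $\bm u\in\partial\Omega_x$ is either a level-set point or a cube-boundary point, and that the sliver of $(\{\phi=x\})_\epsilon$ escaping the cube is harmlessly covered by $(\partial[0,1]^d)_\epsilon$. Once that decomposition is set up, the remaining estimates are routine applications of the Lipschitz bound and the continuity of $f_X$ near $v$.
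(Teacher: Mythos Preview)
Your proof is correct and follows essentially the same route as the paper: decompose $\partial\Omega_x$ into the cube boundary $S_1=\partial[0,1]^d$ and the level set $S_2=\{\phi=x\}$, bound $\lambda_d((S_1)_\epsilon)\le 5d\epsilon$ via a binomial expansion, and bound the level-set contribution inside the cube by $\{\lvert\phi-x\rvert\le L\epsilon\}$ using Lipschitz continuity, then estimate $F(x+L\epsilon)-F(x-L\epsilon)\le 3f_X(v)L\epsilon$ via the continuity of $f_X$ at $v$. The only cosmetic differences are that the paper writes the decomposition as $\lambda_d((S_1)_\epsilon)+\lambda_d((S_2)_\epsilon\setminus(S_1)_\epsilon)$ and bounds the first term by $2[(1+2\epsilon)^d-1]$ rather than $(1+2\epsilon)^d-(1-2\epsilon)^d$, but both yield $4d\epsilon+O(\epsilon^2)$ and hence $5d\epsilon$ for small $\epsilon$.
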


\begin{proof}		
	Let $S_1$ be the boundary of the unit cube $[0,1)^d$, and let $S_2=\{\bm u\in[0,1)^d|\phi(\bm u)=x\}$. Note that $\partial \Omega_x\subset S_1 \cup S_2$. As a result, $$\lambda_d((\partial \Omega_x)_\epsilon)\le \lambda_d((S_1)_\epsilon)+\lambda_d((S_2)_\epsilon\backslash(S_1)_\epsilon).$$
	Note that $\lambda_d((S_1)_\epsilon)\le 2[(1+2\epsilon)^d-1]= 4d\epsilon+O(\epsilon^2)$. So there exists $\epsilon'>0$ such that $\lambda_d((S_1)_\epsilon)\le 5d\epsilon$ for any $\epsilon\le \epsilon'$. For any $\bm v\in (S_2)_\epsilon\backslash(S_1)_\epsilon$, there exists $\bm w\in S_2$ such that $||\bm v-\bm w||\le \epsilon$. Since $\phi(\cdot)$ is Lipschitz, $|\phi(\bm v)-\phi(\bm w)|=|\phi(\bm v)-x|\le L||\bm v-\bm w||\le L\epsilon$. Therefore,
	$$(S_2)_\epsilon\backslash(S_1)_\epsilon\subset S_3:=\{\bm u\in [0,1)^d||\phi(\bm u)-x|\le L\epsilon\}.$$ 
	
	By Assumption~\ref{assum:density}, there exists $\delta>0$ such that $f_X(x)\le (3/2)f_X(v)$  for all $x\in B(v,2\delta)$. Let $\epsilon_0=\min(\epsilon',\delta/L)$. Assume that $\epsilon\le \epsilon_0$ and $x\in B(v,\delta)$. By the mean value theorem, $\lambda_d(S_3) = F(x+L\epsilon)-F(x-L\epsilon)=2L\epsilon f_X(\xi)$ for some $\xi \in B(x,L\epsilon)\subset  B(v,2\delta)$.  Using $f_X(\xi)\le (3/2)f_X(v)$ gives $\lambda_d((\partial \Omega_x)_\epsilon)\le 5d\epsilon + \lambda_d(S_3)\le (5d+3f_X(v)L)\epsilon$ for all $\epsilon\le \epsilon_0$ and all $x\in B(v,\delta)$.
	
\end{proof}

\begin{theorem}\label{thm:lip}
	Suppose that the point set $\{\bm u_1,\dots,\bm u_N\}$ used in \eqref{eq:qmcecdf}  is a $(t,m,d)$-net in base $b\ge 2$, where $N=b^m$. If Assumptions~\ref{assum:density} is satisfied and $\phi(\bm u)$ is Lipschitz continuous over $[0,1)^d$ with modulus $L>0$, then
	\begin{equation*}
	\abs{\hat{v}_N- v}\le \frac{(10d+6f_X(v)L)\sqrt{d}b^{1+t/d}}{f_X(v)}N^{-1/d}
	\end{equation*}
	for  $N$ large enough.
\end{theorem}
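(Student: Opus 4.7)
The plan is to treat this theorem as a direct consequence of Theorem~\ref{thm:varerror} combined with Lemma~\ref{lem:lips}. The hypotheses of Theorem~\ref{thm:varerror} are Assumption~\ref{assum:density} and Assumption~\ref{assum:bound}; the former is given, while the latter is precisely what Lemma~\ref{lem:lips} provides under the Lipschitz hypothesis. So the entire strategy reduces to invoking those two results and tracking constants.

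Concretely, I would first apply Lemma~\ref{lem:lips} to verify Assumption~\ref{assum:bound} with the explicit choice
\[
g(\epsilon)=(5d+3f_X(v)L)\epsilon,
\]
valid for all $\epsilon\le\epsilon_0$ and all $x\in B(v,\delta)$ for the $\epsilon_0,\delta>0$ supplied by the lemma. This $g$ is clearly positive, nondecreasing, and satisfies $g(\epsilon)\to 0$ as $\epsilon\to 0_+$, so it is admissible in Assumption~\ref{assum:bound}.

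Next I would invoke Theorem~\ref{thm:varerror} with this specific $g$. For $N$ large enough that $r(N):=\sqrt{d}\,b^{1+t/d}N^{-1/d}\le \epsilon_0$, the theorem yields
\[
\abs{\hat{v}_N- v}\le\frac{2g(r(N))}{f_X(v)}
=\frac{2(5d+3f_X(v)L)}{f_X(v)}\,\sqrt{d}\,b^{1+t/d}N^{-1/d},
\]
which simplifies to the stated bound after pulling the factor of $2$ through.

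The only thing to check is a bookkeeping issue: that the single threshold from Theorem~\ref{thm:varerror} is compatible with the $\epsilon_0$ coming out of Lemma~\ref{lem:lips}, i.e.\ that both ``$N$ large enough'' conditions can be met simultaneously. This is immediate since $r(N)\to 0$, so by choosing $N$ larger than both thresholds (the lemma's $\epsilon_0$ condition and the theorem's $N_0(\delta,\delta',\epsilon_0)$ condition) everything lines up, and the claimed error bound follows. There is no substantive obstacle in this proof beyond verifying that the linear $g$ produced by the Lipschitz assumption is exactly what is needed to feed Theorem~\ref{thm:varerror}.
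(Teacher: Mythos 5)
Your proposal is correct and matches the paper's own proof exactly: the paper likewise combines Lemma~\ref{lem:lips} (which supplies Assumption~\ref{assum:bound} with $g(\epsilon)=(5d+3f_X(v)L)\epsilon$) with Theorem~\ref{thm:varerror} and simplifies the constant. Your extra remark about reconciling the two ``$N$ large enough'' thresholds is a fair point of care that the paper glosses over, but it poses no difficulty since $r(N)\to 0$.
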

\begin{proof}
	By Lemma~\ref{lem:lips} and Theorem~\ref{thm:varerror}, there exists $N_0>0$ such that  for $N\ge N_0$, 
	$$
	\abs{\hat{v}_N- v}\le\frac{2(5d+3f_X(v)L)(\sqrt{d}b^{1+t/d}N^{-1/d})}{f_X(v)}= \frac{(10d+6f_X(v)L)\sqrt{d}b^{1+t/d}}{f_X(v)N^{1/d}}.
	$$
\end{proof}

Assumption~\ref{assum:density} is typically used in establishing the asymptotic normality of $\hat v_N$ in the MC setting. The Lipschitz continuity condition on $\phi$ can be easily verified for some applications; see Section~\ref{sec:num} for the greater detail. 

\section{Convergence analysis for RQMC expected shortfall estimation}\label{sec:cvar}
In this section, we study the MSE of the expected shortfall estimate $\hat c_N$ when using RQMC.
Define $K(x)=\mbe[(x-X)^+]$, which is estimated by
\begin{equation}
\hat K_N(x) = \frac{1}{N}\sum_{i=1}^N (x-X_i)^+ = \frac{1}{N}\sum_{i=1}^N (x-\phi(\bm u_i))^+,
\end{equation}
where $\bm u_i$ are given in \eqref{eq:qmcecdf}.
Note that $\hat K_N(x)$ can be viewed as a QMC quadrature rule $\hat{I}_N(f)$ and $K(x)=I(f)$ for the kink function  $f(\bm u)=(x-\phi(\bm u))^+$. Also, $c =  v- K(v)/p$ and $\hat c_N = \hat v_N-\hat K_N(\hat v_N)/p$. The following lemma gives a relationship between the quantile estimation error and the expected shortfall estimation error.

\begin{lemma}\label{lem:cvar}
	If Assumption~\ref{assum:density} is satisfied, then
	\begin{equation*}
	\hat{c}_N-c = [K(v)-\hat K_N(v)]/p + B_N,
	\end{equation*}
	where 
	\begin{equation*}
	|B_N| \le \frac 1 p|\hat v_N-v|(2/N+|\hat F_N(v)-F(v)|).
	\end{equation*}
\end{lemma}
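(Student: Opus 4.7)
The plan is to decompose $\hat c_N - c$ so that the ``main'' error $(K(v)-\hat K_N(v))/p$ appears isolated, with the remainder $B_N$ absorbing the quantile-estimation error $\hat v_N - v$. Using $c = v - K(v)/p$, $\hat c_N = \hat v_N - \hat K_N(\hat v_N)/p$, and inserting $\pm\hat K_N(v)/p$, one reads off
\[
B_N = (\hat v_N - v) + \frac{\hat K_N(v) - \hat K_N(\hat v_N)}{p},
\]
so the problem reduces to bounding this explicit $B_N$.

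Next I would rewrite $B_N$ as a single integral. Since $(x-\phi(\bm u_i))^+$ is an antiderivative of $\bm 1\{x > \phi(\bm u_i)\}$, the piecewise-linear function $\hat K_N$ is absolutely continuous with derivative $\hat F_N$ almost everywhere, whence
\[
\hat K_N(\hat v_N) - \hat K_N(v) = \int_v^{\hat v_N} \hat F_N(t)\,\mrd t
\]
(with the usual sign convention when $\hat v_N < v$). Substituting gives
\[
p B_N = \int_v^{\hat v_N}\bigl[p - \hat F_N(t)\bigr]\,\mrd t,
\]
so it suffices to bound the integrand uniformly for $t$ between $v$ and $\hat v_N$ and then multiply by $|\hat v_N - v|$.

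For the pointwise bound I would use three facts: Assumption~\ref{assum:density} forces $F$ to be continuous at $v$, so $F(v)=p$; the definition of $\hat v_N$ in \eqref{eq:est} yields $|p-\hat F_N(\hat v_N)|\le 1/N$ (already used at the start of the proof of Theorem~\ref{thm:qmcconsistency}); and the monotonicity of $\hat F_N$ gives $|\hat F_N(t)-\hat F_N(\hat v_N)|\le |\hat F_N(v)-\hat F_N(\hat v_N)|$ whenever $t$ lies between $v$ and $\hat v_N$. Routing the triangle inequality through $\hat F_N(\hat v_N)$ then produces
\[
|p - \hat F_N(t)| \le |p-\hat F_N(\hat v_N)| + |\hat F_N(\hat v_N)-\hat F_N(v)| \le \frac{2}{N} + |\hat F_N(v)-F(v)|,
\]
where one $1/N$ comes from the proximity of $\hat F_N(\hat v_N)$ to $p$ and the second $1/N$ from passing from $\hat F_N(\hat v_N)$ back to $F(v)=p$. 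Integrating this bound over an interval of length $|\hat v_N - v|$ gives the claimed estimate.

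The main obstacle is essentially bookkeeping: verifying the antiderivative identity for the step-function slope of $\hat K_N$, and handling both orientations $\hat v_N \gtrless v$ symmetrically (so that the sign of the integral is absorbed cleanly into $|\hat v_N - v|$). The slightly loose ``$2/N$'' in the statement is exactly the slack introduced by pivoting through $\hat F_N(\hat v_N)$; a case-by-case analysis would remove this term but breaks the symmetry and is unnecessary for the downstream MSE bounds.
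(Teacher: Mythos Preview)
Your argument is correct. The decomposition of $B_N$, the integral representation $pB_N=\int_v^{\hat v_N}[p-\hat F_N(t)]\,\mrd t$ via the absolute continuity of $\hat K_N$, and the pointwise bound through $\hat F_N(\hat v_N)$ all go through exactly as you describe, and they yield precisely the stated inequality.

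The paper's own proof takes a shortcut: it does not derive the intermediate inequality
\[
|B_N|\le \frac{1}{p}|\hat v_N-v|\bigl(2|\hat F_N(\hat v_N)-F(v)|+|\hat F_N(v)-F(v)|\bigr)
\]
from scratch but simply cites it as Equation~(12) of Sun and Hong~\cite{sum:hong:2010}, and then finishes with the one-line observation $|\hat F_N(\hat v_N)-F(v)|=|\hat F_N(\hat v_N)-p|\le 1/N$ (using $F(v)=p$ from Assumption~\ref{assum:density}). Your derivation is therefore a self-contained version of what the paper obtains by reference: the underlying mechanism---writing $\hat K_N(\hat v_N)-\hat K_N(v)$ as an integral of $\hat F_N$ and pivoting through $\hat F_N(\hat v_N)$---is the same, but you supply the details rather than outsourcing them. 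This has the advantage of making the lemma independent of the external source and of making transparent where each of the three terms $2/N$ and $|\hat F_N(v)-F(v)|$ originates.
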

\begin{proof}
	Let $B_N=\hat{c}_N-c - [K(v)-\hat K_N(v)]/p$. By Equation (12) in \cite{sum:hong:2010}, we find that
	$$|B_N| \le \frac{1}{p} |\hat v_N-v|(2|\hat F_N(\hat v_N)-F(v)|+|\hat F_N(v)-F(v)|).$$
	Under Assumption~\ref{assum:density}, we have $$|\hat F_N(\hat v_N)-F(v)|=|\hat F_N(\hat v_N)-p|\le 1/N,$$
	which completes the proof.
\end{proof}

\begin{theorem}
	Suppose that the point set $\{\bm u_1,\dots,\bm u_N\}$ used in \eqref{eq:qmcecdf} and \eqref{eq:es}  is a scrambled $(t,m,d)$-net in base $b\ge 2$, where $N=b^m$. Suppose additionally that Assumptions~\ref{assum:density} and \ref{assum:bound} are satisfied and $(v-\phi(\bm u))^+\in L^2([0,1)^d)$. Then for $N$ large enough,
	\begin{equation}\label{eq:cvarmse}
	\mbe[(\hat{c}_N- c)^2]\le \frac 2{p^2} \var[\hat K_N(v)]+a_N \left(\frac{4}{N^2}+\var[\hat F_N(v)]\right).
	\end{equation}
	where
	$$a_N:=8\left[ \frac{g(\sqrt{d}b^{1+t/d}N^{-1/d})}{pf_X(v)}\right]^2\to 0\text{ as }N\to \infty,$$ 
	and $g(\cdot)$ is given in Assumption~\ref{assum:bound}.
	Particularly, $\mbe[(\hat{c}_N- c)^2]=o(1/N)$. If $\phi(\bm u)$ is of BVHK and  $g(\epsilon)=\kappa\epsilon$ for some constant $\kappa>0$, then $$\mbe[(\hat{c}_N- c)^2]=O(N^{-1-1/(2d-1)+\epsilon})$$ for arbitrarily small $\epsilon>0$.
	
\end{theorem}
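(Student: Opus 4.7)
The plan is to feed the quantile-error bound from Theorem~\ref{thm:varerror} into the expected-shortfall decomposition supplied by Lemma~\ref{lem:cvar}, and then exploit two standard properties of Owen-scrambled $(t,m,d)$-nets: they are unbiased, and each realization is almost surely itself a $(t,m,d)$-net in base $b$. Writing $\hat c_N - c = (K(v) - \hat K_N(v))/p + B_N$ and applying $(a+b)^2 \le 2a^2 + 2b^2$ gives
\[
\mbe[(\hat c_N - c)^2] \le \frac{2}{p^2}\mbe[(K(v) - \hat K_N(v))^2] + 2\mbe[B_N^2].
\]
Unbiasedness of the scrambled-net estimator ($\mbe[\hat K_N(v)] = K(v)$) collapses the first expectation into $(2/p^2)\var[\hat K_N(v)]$.

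To handle $\mbe[B_N^2]$, I would apply Theorem~\ref{thm:varerror} pathwise, which is legitimate precisely because each scrambled realization is (almost surely) a $(t,m,d)$-net. This produces the deterministic bound $|\hat v_N - v| \le 2g(r(N))/f_X(v)$, valid for all $N \ge N_0$, where $r(N) := \sqrt{d}b^{1+t/d}N^{-1/d}$. Plugging this into the inequality of Lemma~\ref{lem:cvar} and using $(2/N + |\hat F_N(v) - F(v)|)^2 \le 8/N^2 + 2(\hat F_N(v) - F(v))^2$ bounds $B_N^2$ (up to constants absorbed into $a_N$) by a multiple of $4/N^2 + (\hat F_N(v) - F(v))^2$; taking expectations and using $\mbe[\hat F_N(v)] = F(v)$ then yields \eqref{eq:cvarmse}. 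Monotonicity of $g$ together with $g(0_+) = 0$ and $r(N) \to 0$ gives $a_N \to 0$.

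The $o(1/N)$ statement then follows by invoking Owen's scrambled-net variance bound for $L^2$ integrands: for any $f \in L^2([0,1)^d)$, a scrambled $(t,m,d)$-net estimator has variance $o(1/N)$. The hypothesis $(v-\phi(\bm{u}))^+ \in L^2$ and the boundedness of $\bm{1}\{\phi(\bm{u}) \le v\}$ make this applicable to both $\hat K_N(v)$ and $\hat F_N(v)$, and combined with $a_N \to 0$ the right-hand side of \eqref{eq:cvarmse} is $o(1/N)$.

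For the sharp rate, the Lipschitz boundary hypothesis $g(\epsilon) = \kappa\epsilon$ upgrades $a_N$ to $O(N^{-2/d})$. The BVHK assumption on $\phi$ together with the Lipschitz kink/jump surface permits invoking the scrambled-net variance rate $O(N^{-1-1/(2d-1)+\epsilon})$ for both the kink integrand $(v-\phi(\bm{u}))^+$ and the indicator $\bm{1}\{\phi(\bm{u}) \le v\}$, along the lines of \cite{he:wang:2015}. Substituted into \eqref{eq:cvarmse}, the dominant contribution is $(2/p^2)\var[\hat K_N(v)] = O(N^{-1-1/(2d-1)+\epsilon})$, while the $a_N$-weighted second factor is of the smaller order $O(N^{-2/d-1-1/(2d-1)+\epsilon})$. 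The main obstacle will be securing this refined variance rate: it is not a consequence of the Koksma--Hlawka inequality (the integrands are not BVHK) and requires a smoothing argument in which a bandwidth $\delta$ is chosen to balance the Hardy--Krause variation blow-up of the smoothed integrand against the bias introduced by smoothing, with $\delta \sim N^{-1/(2d-1)}$ giving the stated exponent.
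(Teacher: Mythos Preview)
Your proposal is correct and follows essentially the same route as the paper: decompose via Lemma~\ref{lem:cvar}, apply $(a+b)^2\le 2a^2+2b^2$, convert squared errors to variances using scrambled-net unbiasedness, bound $|\hat v_N-v|$ pathwise via Theorem~\ref{thm:varerror} (legitimate because a scrambled net is a $(t,m,d)$-net w.p.1), and then invoke Owen's $o(1/N)$ variance result and the rates from \cite{he:wang:2015} for the refined bound. The only minor deviation is that the paper cites a separate result (Theorem~4.4 of \cite{he:wang:2015}) giving the sharper bound $\var[\hat F_N(v)]=O(N^{-1-1/d})$ for the indicator rather than the kink rate you quote; since that term carries the prefactor $a_N=O(N^{-2/d})$ either bound suffices, and your closing paragraph on the smoothing/bandwidth trade-off is just a sketch of the mechanism behind the cited Theorem~3.5 of \cite{he:wang:2015}, which the paper simply invokes.
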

\begin{proof}
	It is known that a scrambled $(t,m,d)$-net is a $(t,m,d)$-net w.p.1 (see \cite{owen:1995}).
	By combining Theorem~\ref{thm:varerror} and Lemma~\ref{lem:cvar}, we have
	
	\begin{align*}
	\mbe[(\hat{c}_N- c)^2] &\le 2\mbe [(K(v)-\hat K_N(v)]/p^2 +2\mbe[B_N^2]\\
	&\le \frac{2\var[\hat K_N(v)]}{p^2} +\left[ \frac{2g(\sqrt{d}b^{1+t/d}N^{-1/d})}{pf_X(v)}\right]^2\left(\frac{8}{N^2}+2\var[\hat F_N(v)]\right).
	\end{align*}
	
	For any square-integrable integrands, as shown in \cite{owen:1997b}, the scrambled net variance with sample size $N$ is $o(1/N)$. This implies $\var[\hat K_N(v)]=o(1/N)$ and $\var[\hat F_N(v)]=o(1/N)$, leading to $\mbe[(\hat{c}_N- c)^2]=o(1/N)$.
	
	Since $g(\epsilon)=\kappa\epsilon$ for some constant $\kappa>0$, it is easy to see that $\partial \Omega_v$ admits ($d-1$)-dimensional Minkowski content (see \cite{he:wang:2015}). By Theorem~3.5 in \cite{he:wang:2015}, we have $\var[\hat K_N(v)]=O(N^{-1-1/(2d-1)+\epsilon})$ for arbitrarily small $\epsilon>0$. By Theorem~4.4 in \cite{he:wang:2015}, we have $\var[\hat F_N(v)]=O(N^{-1-1/d})$. Consequently, $\mbe[(\hat{c}_N- c)^2]=O(N^{-1-1/(2d-1)+\epsilon})$.
	
\end{proof}

\begin{remark}
	The deterministic error bound for the quantile estimate established in Theorem~\ref{thm:varerror} plays an important role in studying the MSE of the expected shortfall estimate.  The convergence result in Theorem~\ref{thm:qmcconsistency} does not help to bound the MSE. Observed from \eqref{eq:cvarmse}, the accuracy of the expected shortfall estimate depends strongly on the RQMC integration of the kink function $(v-\phi(\bm u))^+$. This implies that if the RQMC quadrature rule yields a faster  rate of convergence for the kink function, one can expect a better performance of the expected shortfall estimate. However, if $\phi(\bm u)$ is not of BVHK, it may be hard to predict the MSE rate for the function $(v-\phi(\bm u))^+$ unless using the worst-case rate $o(1/N)$.

\end{remark}

\section{Numerical Examples}\label{sec:num}

A stochastic activity network (SAN) models the time to compute a project having activities with random durations and precedence constraints. Figure~\ref{fig:san} shows an instance of  SAN with $d=15$ activities, which correspond to the edges in the network.  Dong and Nakayama \cite{dong:2017}  studied this model with LHS. Let $Y_i$ denote the time to complete the activity $i$. Assume that the activity durations $Y_i$ are independent exponential random variables $\mathrm{Exp}(\lambda_i)$, i.e., the density of $Y_i$ is given by $p_i(x)=\lambda_i\exp(-\lambda_i x)\bm 1\{x\ge 0\}$, where $\lambda_i>0$. The network in Figure~\ref{fig:san} has $q=10$ paths form nodes $s$ to $t$, denoted by $B_1,\dots,B_{q}$. Specially, $B_1=\{1,4,11,15\}$, $B_2=\{1,4,12\}$, $B_3=\{2,5,11,15\}$, $B_4=\{2,5,12\}$, $B_5=\{2,6,13\}$, $B_6=\{2,7,14\}$, $B_7=\{3,8,11,15\}$, $B_8=\{3,8,12\},$ $B_9=\{3,9,15\},\ B_{10}=\{3,10,14\}$. The time to complete the project can be modeled by the random variable $$X = \max_{i=1,\dots,q}\sum_{j\in B_i} Y_j.$$ 

We are interested in estimating the quantile of $X$. To simulate the model using QMC, we generate  $Y_j=-(1/\lambda_j)\log u_j$ for $j=1,\dots,d$. As a result, $X$ can be expressed a function of $\bm u=(u_1,\dots,u_d)$, denoted by $\phi(\bm u)$. It should be noted that $\phi(\bm u)$ is not Lipschitz continuous. So Theorem~\ref{thm:lip} cannot be applied directly. To circumvent this, we rewrite the set $\Omega_x$ as $\Omega_x = \cap_{i=1,\dots,q} A_i$, where 
\begin{equation*}
A_i = \{\bm u\in[0,1)^d|-\sum_{j\in B_i} \frac {\log u_j}{\lambda_j}\le x\} =\{\bm u\in[0,1)^d|\prod_{j\in B_i} u_j^{1/\lambda_j}\ge e^{-x}\}.
\end{equation*}
Let $\lambda_{\mathrm{max}} = \max_{j=1,\dots,d} \lambda_j$, and let $\phi_i(\bm u) = \prod_{j\in B_i} u_j^{\lambda_{\mathrm{max}}/\lambda_j}$. Then $A_i=\{\bm u\in[0,1)^d|\phi_i(\bm u)\ge e^{-\lambda_{\mathrm{max}}x}\}$. It is easy to see that $\phi_i$ is Lipschitz continuous over $[0,1]^d$ because $\lambda_{\mathrm{max}}/\lambda_j\ge 1$ for all $j$. So by Lemma~\ref{lem:lips} and using $(\partial \Omega_x)_\epsilon \subset \cup_{i=1,\dots,q} (\partial A_i)_\epsilon$, the conditions in Theorem~\ref{thm:varerror} are satisfied with $g(\epsilon)=\kappa\epsilon$ for some constant $\kappa>0$. The QMC error for the quantile estimation is $O(N^{-1/d})$. 

We now study the MSE of the expected shortfall estimate $\hat{c}_N$ when using RQMC. By Theorem~4.4 in \cite{he:wang:2015}, we have $\var[\hat F_N(v)]=O(N^{-1-1/d})$ since $g(\epsilon)=\kappa\epsilon$. Using \eqref{eq:cvarmse}  gives
$$\mbe[(\hat{c}_N- c)^2]\le \frac{2}{p^2} \var[\hat K_N(v)]+O(N^{-1-2/d}).$$
However, since $\phi(\bm u)$ is not of BVHK, the rate $O(N^{-1-1/(2d-1)+\epsilon})$ established in Theorem~3.5 of \cite{he:wang:2015} cannot be applied for the integrand $(v-\phi(\bm u))^+$. Instead, using the worst-case rate $\var[\hat K_N(v)]=o(1/N)$ arrives at $\mbe[(\hat{c}_N- c)^2]=o(1/N)$. This confirms that RQMC performs asymptotically better than MC and LHS for expected shortfall estimation. The MSE rate $o(1/N)$ may be too conservative when $d$ is small.

Figure~\ref{fig:results} shows the numerical results for the SAN model in Figure~\ref{fig:san} with $\lambda_i=1/2$ for $i\le 8$ and $\lambda_i=1$ for $i>8$. In the numerical experiments,
we use  Sobol' points as inputs for QMC-based estimates and scrambled Sobol' points for RQMC-based estimates. The
MSEs in right panel of Figure~\ref{fig:results} are computed based on 100 independent repetitions. Estimation of the errors requires knowing the true value of the quantity being estimated.
Here we use the MC method with a very large sample size (say, $N=10^{9}$) to obtain  accurate estimates
of $v$ and $c$ and treat them as the true values. We consider the case $p=0.1$ for which the true values are $v=2.5446,\ c=2.1596.$
The empirical evidence shows convergence rates of (R)QMC beyond
the crude MC rate of $N^{-1/2}$. Particularly, RQMC yields lower MSEs than MC for both the quantile and the expected shortfall estimations.

\begin{figure}[t]
	\centering
	\caption{A SAN model taken from  Dong and Nakayama \cite{dong:2017}.\label{fig:san}}
	\includegraphics[width=0.6\hsize]{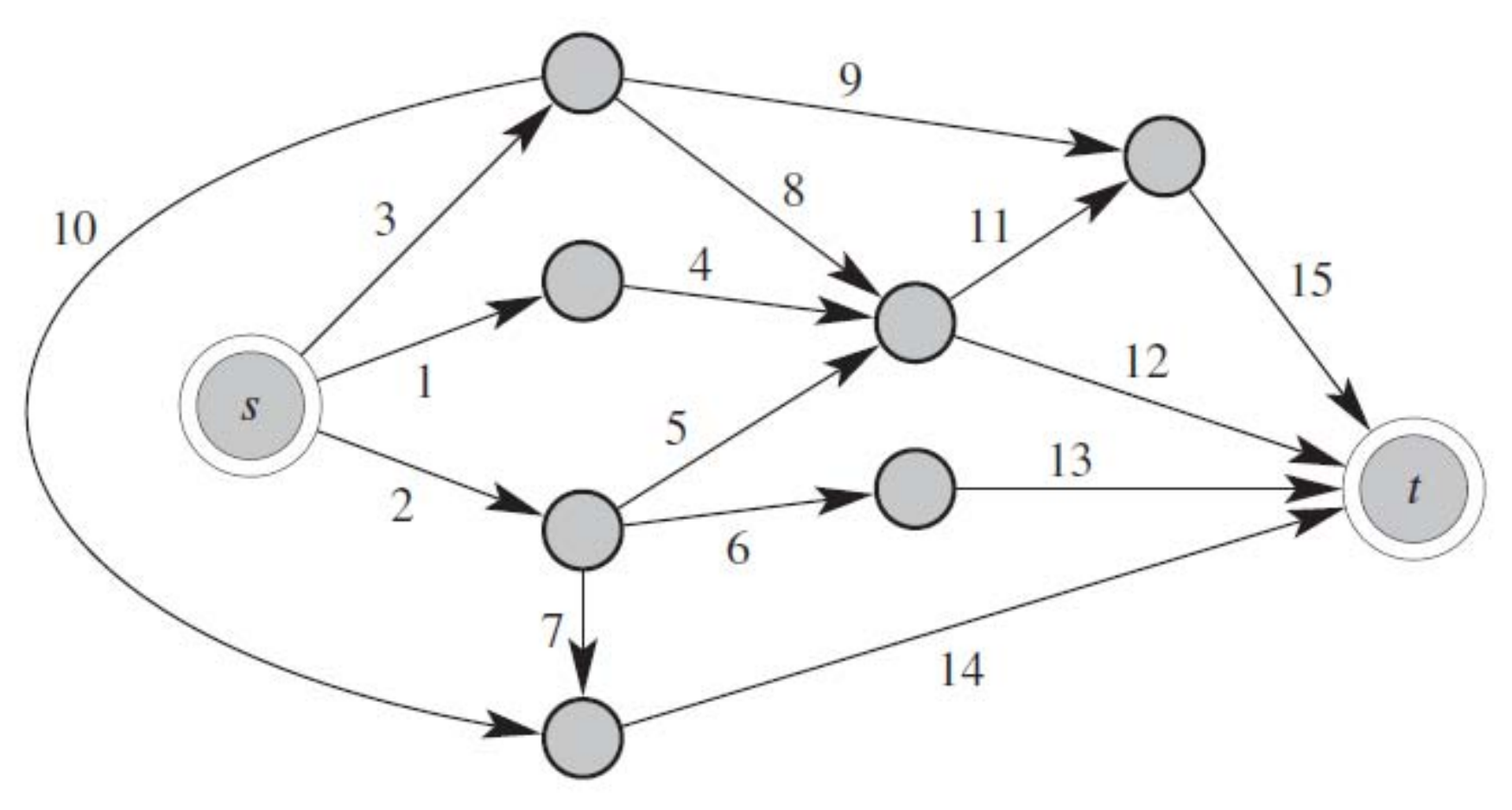}
\end{figure}

\begin{figure}[t]
	\centering
	\caption{The errors of QMC, RQMC and MC based estimates with $p=0.1$. There are two reference
		lines proportional to labeled powers of $N$. All the MSEs are computed based on 100 independent runs for
		$N=2^i,\ i=8,9,\dots,20$. \label{fig:results}}
	\includegraphics[width=\hsize]{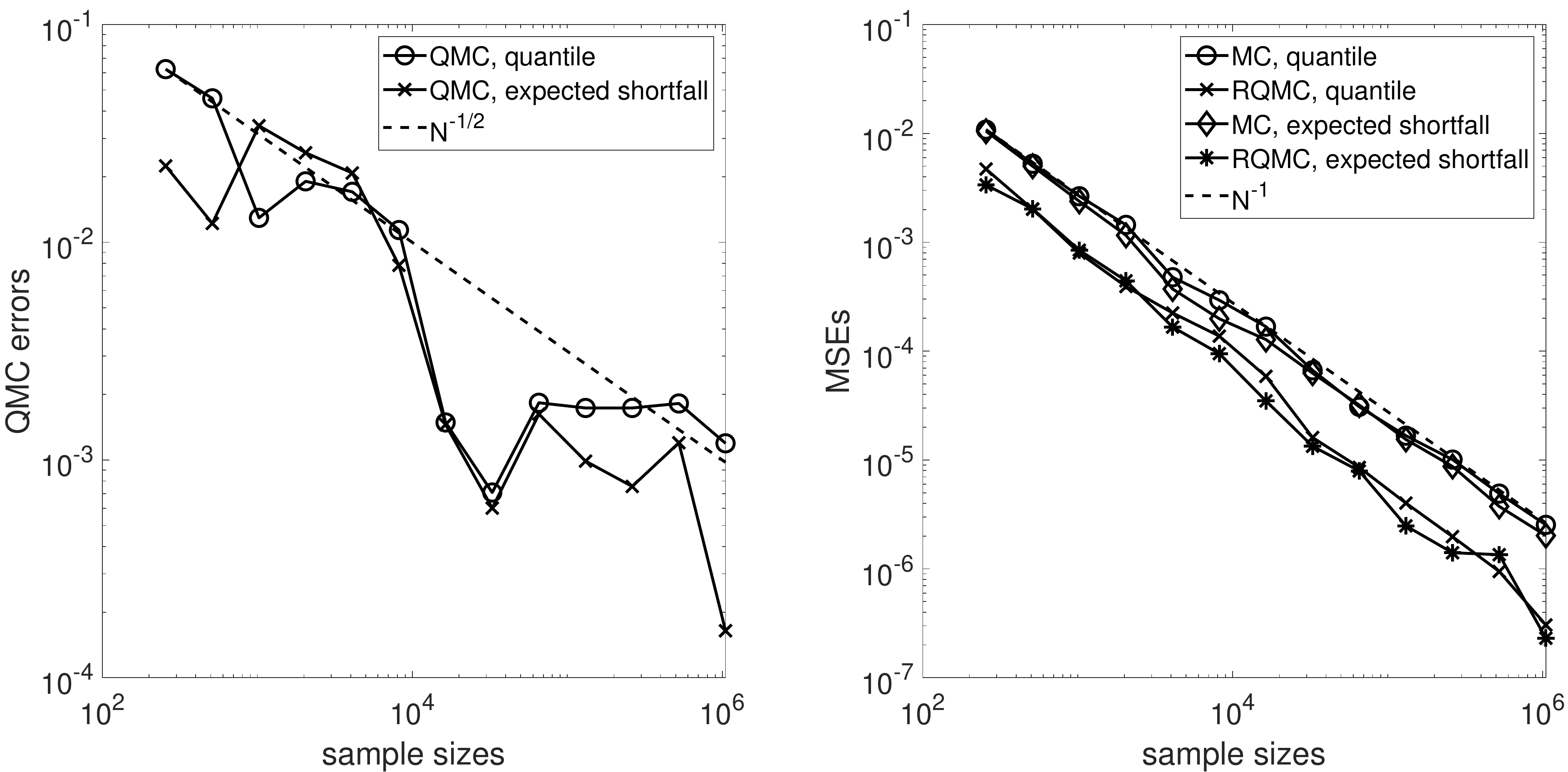}
\end{figure}

\section{Conclusion}\label{sec:concl}
In this paper, we proved the convergence of QMC-based quantile estimates under very mild assumptions. More importantly, we proved that the QMC error  is bounded from above by $N^{-1/d}$. 
The error rate  $O(N^{-1/d})$ is worse than the usual MC rate $O(N^{-1/2})$ for $d>2$. 
But this error rate is critical to establish considerable  MSE rates of RQMC for expected shortfall estimation. It is possible to obtain a faster error rate for RQMC-based quantile estimates as suggested by the numerical study. Owen \cite{owen:1997b} showed that scrambled net quadrature rules can yield an MSE of $o(N^{-1})$ for square-integrable functions. We conjecture that RQMC-based quantile estimation can also lead to an MSE of $o(N^{-1})$ under some technical conditions. We leave this problem open for future research.


\begin{thebibliography}{10}
	
	\bibitem{avra:wils:1998}
	A.~N. Avramidis and J.~R. Wilson, \emph{Correlation-induction techniques for
		estimating quantiles in simulation experiments}, Oper. Res. \textbf{46}
	(1998), no.~4, 574--591.
	
	\bibitem{chen:2011}
	S.~Chen, J.~Dick, and A.~B. Owen, \emph{Consistency of {Markov} chain
		quasi-{Monte} {Carlo} on continuous state spaces}, Ann. Stat. \textbf{39}
	(2011), no.~2, 673--701.
	
	\bibitem{dick:pill:2010}
	J.~Dick and F.~Pillichshammer, \emph{Digital nets and sequences: Discrepancy
		theory and quasi-monte carlo integration}, Cambridge University Press, 2010.
	
	\bibitem{dong:2017}
	H.~Dong and M.~K. Nakayama, \emph{Quantile estimation with {Latin} hypercube
		sampling}, Oper. Res. \textbf{65} (2017), no.~6, 1678--1695.
	
	\bibitem{glas:2000}
	P.~Glasserman, P.~Heidelberger, and P.~Shahabuddin, \emph{Variance reduction
		techniques for estimating value-at-risk}, Management Sci. \textbf{46} (2000),
	no.~10, 1349--1364.
	
	\bibitem{glyn:1996}
	P.~W. Glynn, \emph{Importance sampling for {M}onte {C}arlo estimation of
		quantiles}, Mathematical Methods in Stochastic Simulation and Experimental
	Design: Proceedings of the 2nd St. Petersburg Workshop on Simulation, 1996,
	pp.~180--185.
	
	\bibitem{he:wang:2015}
	Z.~He and X.~Wang, \emph{On the convergence rate of randomized quasi-{M}onte
		{C}arlo for discontinuous functions}, SIAM J. Numer. Anal. \textbf{53}
	(2015), no.~5, 2488--2503.
	
	\bibitem{hong:2014}
	L.~J. Hong, Z.~Hu, and G.~Liu, \emph{Monte {C}arlo methods for value-at-risk
		and conditional value-at-risk: A review}, ACM Trans. Model. Comput.
	Simulation \textbf{24} (2014), no.~4, 22.
	
	\bibitem{jin:2003}
	X.~Jin, M.~C. Fu, and X.~Xiong, \emph{Probabilistic error bounds for simulation
		quantile estimators}, Management Sci. \textbf{49} (2003), no.~2, 230--246.
	
	\bibitem{jin:zhang:2006}
	X.~Jin and A.~X. Zhang, \emph{Reclaiming quasi-{M}onte {C}arlo efficiency in
		portfolio value-at-risk simulation through {F}ourier transform}, Management
	Sci. \textbf{52} (2006), no.~6, 925--938.
	
	\bibitem{lecu:lemi:2005}
	P.~L'Ecuyer and C.~Lemieux, \emph{Recent advances in randomized quasi-{M}onte
		{C}arlo methods}, Modeling Uncertainty: An Examination of Stochastic Theory,
	Methods, and Applications (Moshe Dror, Pierre L'Ecuyer, and Ferenc
	Szidarovszky, eds.), Kluwer Academic Publishers, 2005, pp.~419--474.
	
	\bibitem{mars:1993}
	J.~E. Marsden and M.~J. Hoffman, \emph{Elementary classical analysis},
	Macmillan, 1993.
	
	\bibitem{nied:1992}
	H.~Niederreiter, \emph{Random {Number} {Generation} and {Quasi}-{Monte} {Carlo}
		{Methods}}, SIAM, Philadelphia, 1992.
	
	\bibitem{owen:1995}
	A.~B. Owen, \emph{Randomly permuted $(t, m, s)$-nets and $(t, s)$-sequences},
	Monte Carlo and Quasi-Monte Carlo Methods in Scientific Computing
	(H.~Niederreiter and P.~J.-S. Shiue, eds.), Springer, 1995, pp.~299--317.
	
	\bibitem{owen:1997b}
	\bysame, \emph{Monte {C}arlo variance of scrambled net quadrature}, SIAM J.
	Numer. Anal. \textbf{34} (1997), no.~5, 1884--1910.
	
	\bibitem{owen:1998}
	\bysame, \emph{Latin supercube sampling for very high-dimensional simulations},
	ACM Trans. Model. Comput. Simul. \textbf{8} (1998), no.~1, 71--102.
	
	\bibitem{owen:2005}
	\bysame, \emph{Multidimensional variation for quasi-{Monte Carlo}},
	International Conference on Statistics in honour of Professor K.-T. Fang's
	65th birthday (J.~Fan and G.~Li, eds.), 2005.
	
	\bibitem{papa:pask:1999}
	A.~Papageorgiou and S.~H. Paskov, \emph{Deterministic simulation for risk
		management}, J. Portfolio Management \textbf{25} (1999), no.~5, 122--127.
	
	\bibitem{Rose:1952}
	M.~Rosenblatt, \emph{Remarks on a multivariate transformation}, Ann. Math.
	Stat. \textbf{23} (1952), no.~3, 470--472.
	
	\bibitem{serf:1980}
	R.~J. Serfling, \emph{Approximation theorems of mathematical statistics},
	Wiley, New York, 1980.
	
	\bibitem{sum:hong:2010}
	L.~Sun and L.~J. Hong, \emph{Asymptotic representations for importance-sampling
		estimators of value-at-risk and conditional value-at-risk}, Oper. Res. Lett.
	\textbf{38} (2010), no.~4, 246--251.
	
	\bibitem{trin:2007}
	A.~A. Trindade, S.~Uryasev, A.~Shapiro, and G.~Zrazhevsky, \emph{Financial
		prediction with constrained tail risk}, J. Banking Finance \textbf{31}
	(2007), no.~11, 3524--3538.
	
	\bibitem{zhu:2014}
	H.~Zhu and J.~Dick, \emph{Discrepancy bounds for deterministic
		acceptance-rejection samplers}, Electron. J. Stat. \textbf{8} (2014), no.~1,
	678--707.
	
\end{thebibliography}
\bibliographystyle{amsplain}

\end{document}